\title {The Group of Primitive Almost Pythagorean Triples}
\author{Nikolai A. Krylov and Lindsay M. Kulzer}
\date {}
\begin{document}

\newtheorem{thm}{Theorem}
\newtheorem{lem}{Lemma}
\newtheorem{claim}{Claim}
\newtheorem{dfn}{Definition}
\newtheorem{prop}{Proposition}

\def\Natu           {\mathbb N}
\def\Inte           {\mathbb Z}
\def\Rati           {\mathbb Q}
\def\Real           {\mathbb R}
\def\Field          {\mathbb F}
\def\P              {{\cal P}}
\def\F              {{\cal F}}
\def\A              {{\cal A}}

\def\lla            {\longleftarrow}
\def\lra            {\longrightarrow}
\def\ra             {\rightarrow}
\def\hra            {\hookrightarrow}
\def\lmt            {\longmapsto}
\def\lam            {\lambda}
\def\del            {\delta}
\def\eps            {\epsilon}

\maketitle

\parskip=3mm

\begin{abstract}
We consider the triples of integer numbers that are solutions of the
equation $x^2+qy^2=z^2$, where $q$ is a fixed, square-free arbitrary
positive integer. The set of equivalence classes of these triples
forms an abelian group under the operation coming from complex
multiplication. We investigate the algebraic structure of this group
and describe all generators for each $q\in\{2,3,5,6\}$. We also show
that if the group has a generator with the third coordinate being a
power of 2, such generator is unique up to multiplication by $\pm1$.
\end{abstract}

\noindent {\bf Keywords}: Pythagorean triples; Infinitely generated commutative groups\\
{\bf 2010 Mathematics Subject Classification}: 20K20, 11D09.

\section{Introduction and the group of PPTs}

The set of Pythagorean triples has various interesting structures.
One of such structures is induced by a binary operation introduced
by Taussky in \cite{Taussky}. Recall that a Pythagorean triple (PT
from now on) is an ordered triple $(a,b,c)$ of natural numbers
satisfying the identity $a^2+b^2 = c^2$, and given two such triples
$(a_1,b_1,c_1)$ and $(a_2,b_2,c_2)$ we can produce another one using
the following operation

\begin{equation}
\label{operation} A:= a_1a_2 + b_1b_2, ~~B:= |a_1b_2- a_2b_1|, ~~
C:= c_1c_2.
\end{equation}

The natural relation $(a,b,c)\simeq (\lambda a, \lambda b, \lambda
c)$ for $\forall \lambda\in\Natu$, called projectivization, is an
equivalence relation on this set. The operation mentioned above
induces an abelian group structure on the set of equivalence classes
of PTs where the identity element is the class of $(1,0,1)$. When
$a,b$ and $c$ have no common prime divisors, the triple $(a,b,c)$ is
called {\it primitive}. It's easy to see that every equivalence
class contains exactly one primitive Pythagorean triple. Thus the
set of all primitive Pythagorean triples (PPTs from now on) forms an
abelian group under the operation given in (\ref{operation}). The
algebraic structure of this group, denoted by {\bf P}, was
investigated by Eckert in \cite{Eckert}, where he proved that the
group of PPTs is a free abelian group generated by all primitive
triples $(a,b,c)$, where $a>b$ and $c$ is a prime number of the
linear form $c=4n+1$. Every Pythagorean triple $(a,b,c)$ naturally
gives a point on the unit circle with rational coordinates
$(a/c,b/c)$ and the equivalence class of PTs corresponds to a unique
point on the circle. Operation (\ref{operation}) on the Pythagorean
triples corresponds to the ``angle addition" of rational points on
$S^1$ and thus the group of PPTs is identified with the subgroup of
all rational points on $S^1$. Analysis of this group was done by Tan
in \cite{Tan} and his Theorem 1 (see page 167) is equivalent to what
Eckert proved in his Proposition on page 25 of \cite{Eckert}.

It is not hard to notice that the composition law (\ref{operation})
naturally extends to the solutions of the Diophantine equation
\begin{equation}
\label{main} X^2 + q\cdot Y^2 = Z^2
\end{equation}
where $q$ is a fixed, square-free arbitrary positive integer. Via
projectivization, we obtain a well defined binary operation on the
set of equivalence classes of solutions to (\ref{main}), and the set
of such classes forms an abelian group as well. For some special
values of $q$, including all $q\in\{2,3,5,6,7,15\}$, such a group
has been considered by Baldisserri (see \cite{Baldisserri}).
However, it seems that the generators $(3,1,4)$ for $q=7$, and
$(1,1,4)$ for $q=15$ are missing in \cite{Baldisserri}.

With the above in mind, we will consider in this paper the set of
triples we call {\it almost Pythagorean triples}, which are
solutions to the equation (\ref{main}). As in the case of PTs, each
equivalence class here contains exactly one {\it primitive} almost
Pythagorean triple and therefore the set of equivalence classes is
the set of {\it Primitive Almost Pythagorean Triples} (PAPTs from
now on).

In the next two sections we give a complete description of this
group for $q\in\{2,3,5,6\}$, similar to the one given in
\cite{Eckert}. We also prove that for all $q\neq 3$ the group of
PAPTs is free abelian of infinite rank. In the last section we will
discuss solutions $(a,b,c)$ where $c$ is even. Please note that some
of the results we prove here have been obtained earlier by
Baldisserri, however our proof of existence of elements of finite
order is different from the one given in \cite{Baldisserri}. We also
explain that if $(a,b,2^k)$ is a non-trivial solution of
(\ref{main}) with $q\neq 3$, the set of all such solutions makes an
infinite cyclic subgroup of the group of PAPTs. When $q=7$ and
$q=15$ such a subgroup is missing in the Theorem 2. of
\cite{Baldisserri}.

\section{Group of PAPTs}

Let  $T_q$ denote the set of all integer triples $(a, b, c)\in \Inte
\times\Inte \times \Natu$ such that $a^2 + q\cdot b^2 = c^2$. We
introduce the following relation on $T_q$: two triples $(a, b, c)$
and $(A, B, C)$ are equivalent if there exist $m, n \in \Inte
\setminus \{0\}$ such that $m(a, b, c) = n(A, B, C)$, where $m(a, b,
c) = (ma, mb, |mc|)$. It is a straight forward check that this is an
equivalence relation (also known as {\it projectivization}). We will
denote the equivalence class of $(a,b,c)$ by $[a,b,c]$. Note that
$[a,b,c] = [-a,-b,c]$, but $[a,b,c] \neq [-a,b,c]$.  We will denote
the set of these equivalence classes by $\P_q$. Now we define a
binary operation on $\P_q$ that generalizes the one on the set of
PPTs defined by (\ref{operation}).

\begin{dfn}
For two arbitrary classes $[a, b, c],~[A, B, C] \in \P_q$ define
their sum by the formula
$$
[a, b, c] + [A, B, C] := [aA-qbB, aB+bA, cC].
$$
\end{dfn}

It is a routine check that this definition is independent of a
particular choice of a triple and thus the binary operation is well
defined. Here are two examples:\\
If $ q=7,~[3, 1, 4] +[3,1,4]+[3,1,4] = [3,1,4] + [2,6,16] =[-36, 20,
64] = [-9,5, 16].$\\ If $q= 14$,  $[5,2,9] +[13,2,15] = [9,36,135] =
[1,4,15].$

\noindent Since $[a,b,c] +[1,0,1] = [a,b,c],~ [a,b,c] +[-a,b,c] =
[-a^2-qb^2, 0, c^2] = [c^2, 0, c^2]$,  and the operation is
associative (this check is left for the reader), we obtain the
following (c.f. \S2 of \cite{Baldisserri} or \S4.1 of
\cite{Weintraub})

\begin{thm}
$(\P_q,~+)$ is an abelian group. The identity element is $[1,0,1]$
and the inverse of $[a, b, c]$ is $[a, -b, c]=[-a,b,c]$.
\end{thm}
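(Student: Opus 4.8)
The plan is to realize $(\P_q, +)$ as a subgroup of a familiar abelian group via complex multiplication, so that all the axioms are inherited at once. Writing $\sqrt{-q} = i\sqrt q \in \mathbb{C}$, I would send a triple $(a,b,c)$ to the complex number $a + b\,i\sqrt q$, whose squared modulus is exactly $a^2 + qb^2 = c^2$. The key algebraic fact is multiplicativity of the norm: the product $(a + b\,i\sqrt q)(A + B\,i\sqrt q) = (aA - qbB) + (aB + bA)\,i\sqrt q$ has squared modulus $(a^2+qb^2)(A^2+qB^2) = (cC)^2$. This already settles closure, since it shows $(aA - qbB)^2 + q(aB+bA)^2 = (cC)^2$ with $cC \in \Natu$, so the right-hand side of the Definition is again a legitimate element of $\P_q$.

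Because $[a,b,c] = [-a,-b,c]$, the assignment $(a,b,c) \mapsto a + b\,i\sqrt q$ is only well defined up to a nonzero real (in fact rational) scalar, so the correct target is the quotient group $\mathbb{C}^*/\Real^*$. I would define $\Phi\colon \P_q \to \mathbb{C}^*/\Real^*$ by $\Phi([a,b,c]) = [\,a + b\,i\sqrt q\,]$ and verify that it is well defined: if $m(a,b,c) = n(A,B,C)$, then $a + b\,i\sqrt q = (n/m)(A + B\,i\sqrt q)$ with $n/m \in \Real^*$. The norm identity above says precisely that $\Phi$ is a homomorphism for $+$, i.e. $\Phi(x+y) = \Phi(x)\,\Phi(y)$.

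Injectivity is the step I expect to require the most care. Suppose $\Phi([a,b,c]) = \Phi([A,B,C])$, so $a + b\,i\sqrt q = \lambda(A + B\,i\sqrt q)$ for some $\lambda \in \Real^*$. Since $q$ is square-free and positive, $\{1, i\sqrt q\}$ is linearly independent over $\Rati$, so comparing components forces $a = \lambda A$ and $b = \lambda B$ with $\lambda \in \Rati^*$; writing $\lambda = n/m$ and using that $c, C$ are the positive square roots determined by $(a,b)$ and $(A,B)$ then yields $m(a,b,c) = n(A,B,C)$, i.e. $[a,b,c] = [A,B,C]$. With $\Phi$ an injective homomorphism into the abelian group $\mathbb{C}^*/\Real^*$, associativity and commutativity of $+$ are inherited immediately: applying $\Phi$ to both sides of each identity and cancelling by injectivity reduces it to the corresponding identity in $\mathbb{C}^*/\Real^*$.

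It remains to pin down the identity and inverses, which can be read off directly. One computes $[a,b,c] + [1,0,1] = [a,b,c]$, so $[1,0,1]$ is neutral; and $[a,b,c] + [a,-b,c] = [a^2+qb^2, 0, c^2] = [c^2,0,c^2] = [1,0,1]$, so the inverse of $[a,b,c]$ is $[a,-b,c]$, which equals $[-a,b,c]$ by the relation $[x,y,z] = [-x,-y,z]$. Since the image of $\Phi$ is thereby closed under products and inverses and contains the identity, it is a subgroup of $\mathbb{C}^*/\Real^*$ and $\Phi$ is an isomorphism onto it; hence $(\P_q, +)$ is an abelian group with the stated identity and inverses. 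As a lighter alternative one can bypass $\Phi$ entirely: closure is the norm identity above, commutativity follows from the symmetry of the defining formula under swapping the two triples, associativity is a direct (if tedious) expansion, and the identity and inverse are the two displayed computations.
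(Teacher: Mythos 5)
Your proof is correct, but it takes a genuinely different route from the paper. The paper's own proof of this theorem is a bare-hands verification: it records the two computations $[a,b,c]+[1,0,1]=[a,b,c]$ and $[a,b,c]+[-a,b,c]=[-a^2-qb^2,0,c^2]=[c^2,0,c^2]$, declares well-definedness of the operation a routine check, and leaves associativity to the reader. You instead embed $\P_q$ into $\mathbb{C}^*/\Real^*$ via $[a,b,c]\mapsto [\,a+b\,i\sqrt q\,]$ and inherit associativity and commutativity from complex multiplication through an injective homomorphism. This buys you two things the paper handles only implicitly: associativity comes for free, and --- a point you should state more explicitly --- well-definedness of $+$ on equivalence classes also comes for free, since the norm identity $\Phi(x+y)=\Phi(x)\,\Phi(y)$ holds at the level of arbitrary representative triples, and injectivity of $\Phi$ then pins down the class of the result independently of the representatives chosen. (As written, calling $\Phi$ a homomorphism slightly presupposes that $+$ is already well defined; the repair is simply to phrase the identity for triples first and invoke injectivity afterwards.) Your construction is in fact the viewpoint the paper itself adopts later, in Remark 3, where $\P_q$ is identified with $\A_q/\Rati^*$, with $\A_q$ the subgroup of nonzero elements of $\Rati(\sqrt{-q})$ whose norm is a rational square; your $\Phi$ is precisely that identification followed by the inclusion into $\mathbb{C}^*/\Real^*$. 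What the paper's direct approach buys is brevity and self-containedness at this early stage; what yours buys is conceptual clarity and a head start on the structural analysis that follows. Your closing ``lighter alternative'' is essentially the paper's proof verbatim.
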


The purpose of this paper is to see what the algebraic structure of
$(\P_q,~+)$ is, and how it depends on $q$. From now on we will
denote this group simply by $\P_q$. Please note that every
equivalence class $[a,b,c] \in \P_q$ can be represented uniquely by
a primitive triple $(\alpha, \beta, \gamma ) \in T_q$, where $\alpha
> 0$. In particular, this gives us freedom to refer to primitive triples to
describe elements of the group.



\noindent \underline{Remark 1}: The group $\P_q$ is a natural
generalization of the group ${\bf P}$ of PPTs. However, $\P_1$ is
not isomorphic to ${\bf P}$. The key point here is that the triple
$(0,1,1) \notin T_q$, when $q>1$, and the inverse of $[a,b,c]$ is
$[a,-b,c]=[-a,b,c]$. In particular, it forces the consideration of
triples with $a$ and $b$ being all integers and not only positive
ones. As a result, the triples $(1,0,1)$ and $(0,1,1)$ are not
equivalent in $T_1$. In order for the binary operation on the set of
PPTs to be well defined, the triple $(0,1,1)$ must be equivalent to
the identity triple $(1,0,1)$ (see formulae (5) on page 23 of
\cite{Eckert}). The relation between our group $\P_1$ and the group
${\bf P}$ of PPTs is given by the following direct sum decomposition
$$
\P_1 \cong {\bf P}\oplus \Inte/2\Inte,
$$
where the 2-torsion subgroup $\Inte/2\Inte$ is generated by the
element $[0,1,1]$. To prove this, one uses the map $f:{\bf P}\oplus
\Inte/2\Inte \lra \P_1$ defined by the following formula.
$$
f\bigl((a,b,c),n\bigr): = \left\{
\begin{array}{lcc}
[a,b,c]+[1,0,1] = [a,b,c] & \mbox{if} & n=0\\

[a,b,c]+[0,1,1] = [-b,a,c] & \mbox{if} & n=1\\
\end{array} \right.
$$

\noindent It's easy to see that this $f$ is an isomorphism.

\noindent \underline{Remark 2}: The group $\P_q$ also has a
geometric interpretation: Consider the set $\P(\Rati)$ of all points
$(X,Y)\in \Rati\times \Rati$ that belong to the conic $X^2+qY^2=1$.
Let $N=(1,0)$ and take any two $A,B\in \P(\Rati)$. Draw the line
through $N$ parallel to the line $(AB)$, then its second point of
intersection with the conic $X^2+qY^2=1$ will be $A+B$ (see
\cite{Lemmermeyer3}, section 2.2 and also section 1 of
\cite{LemmermeyerC} for the details). Via such geometric point of
view, Lemmermeyer draws a close analogy between the groups
$\P(\Inte)$ of integral points on the conics in the affine plane and
the groups $E(\Rati)$ of rational points on elliptic curves in the
projective plane (\cite{Lemmermeyer3}, \cite{LemmermeyerC}). One of
the key characteristics of $\P(\Inte)$ and $E(\Rati)$ is that both
of the groups are finitely generated. Note however that if $q>0$,
the curve $X^2+qY^2=1$ has only two integer points $(\pm 1, 0)$. One
could consider the solutions of $X^2+qY^2=1$ over a finite field
$\Field_q$ or over the $p$-adic numbers $\Inte_p$. In each of these
cases the group of all solutions is also finitely generated and we
refer the reader to section 4.2 of \cite{LemmermeyerC} for the exact
formulas. In the present paper we investigate the group structure of
all rational points on the conic $X^2+qY^2=1$ when $q\geq 2$ and
such group is never finitely generated, as we explain below.

\section{Algebraic structure of $\P_q$}

The classical enumeration of primitive pythagorean triples in the
form
$$
(a,b,c) =
(u^2-v^2,~2uv,~u^2+v^2)~~~\mbox{or}~~~\left(\frac{u^2-v^2}{2},~uv,~\frac{u^2+v^2}{2}\right)
$$
is a useful component in understanding the group structure on the
set of PPTs. We assume here that integers $u$ and $v$ have no common
prime divisors, otherwise $(a,b,c)$ won't be primitive. One could
use the Diophantus chord method (see for example \S 1.7 of
\cite{Stillwell}) to derive such enumeration of all PPTs. This
method can be generalized to enumerate all solutions to (\ref{main})
for all square-free $q>1$. In particular, if a primitive triple
$(a,b,c)\in T_q$, then there exists a pair $(u,v)$ of integers with
no common prime divisors, such that
$$
(a,b,c) = (\pm(u^2-q v^2),~2uv,~u^2+q v^2)~~~\mbox{or}~~~
\left(\pm\frac{u^2-q v^2}{2},~uv,~\frac{u^2+q v^2}{2}\right).
$$

We can use this enumeration right away to prove that if $c$ is
prime, and $(a,b,c)\in T_q$, then such a pair of integers $(a,b)$ is
essentially unique. Here is the precise statement.

\begin{claim}
If $c$ is prime and
$$
x^2 + qy^2 = c^2 = a^2 + q b^2,~~\mbox{where}~~abxy\neq 0
$$
then $(x,~y) = (h_1a,~h_2b)$, where $h_i=\pm1$.
\end{claim}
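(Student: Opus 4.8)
The plan is to work in the ring of Gaussian-like integers $\Inte[\sqrt{-q}]$. I would interpret each solution $x^2+qy^2=c^2$ as a factorization $(x+y\sqrt{-q})(x-y\sqrt{-q})=c^2$ of the norm form. Setting $\omega=\sqrt{-q}$, the key idea is that since $c$ is prime, the element $c$ either remains prime in $\Inte[\omega]$ or splits as a product of two conjugate primes. The hypothesis $x^2+qy^2=c^2$ together with $abxy\neq 0$ rules out the inert case (an inert $c$ would force $y=b=0$), so $c=\pi\bar\pi$ for some prime $\pi=s+t\omega$ with norm $c$. Then both $x+y\omega$ and $a+b\omega$ have norm $c^2=(\pi\bar\pi)^2$, and unique factorization (or, if $\Inte[\omega]$ is not a UFD for the relevant $q$, a more careful argument at the level of ideals) constrains their factorizations severely.

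The central computation would be to list the possible factorizations of an element of norm $c^2$: up to units, such an element must be one of $\pi^2$, $\bar\pi^2$, or $\pi\bar\pi=c$. The last option gives $y=0$, contradicting $xy\neq 0$, so $x+y\omega$ is an associate of $\pi^2$ or $\bar\pi^2$, and likewise $a+b\omega$. Comparing these across the two solutions, I would deduce that $x+y\omega$ and $a+b\omega$ are associates of each other (or of each other's conjugate). Since the only units in $\Inte[\omega]$ for $q\geq 2$ are $\pm1$, being associates forces $x+y\omega=\pm(a+b\omega)$ or $x+y\omega=\pm(a-b\omega)$, which upon equating real and imaginary parts yields exactly $(x,y)=(h_1 a, h_2 b)$ with $h_i=\pm 1$ — after checking that the mixed sign possibilities collapse to this form.

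An alternative, more self-contained route avoids ring theory entirely and uses the enumeration stated just before the claim. Since $(x,y,c)$ and $(a,b,c)$ both solve (\ref{main}) with the same $c$, the parametrization gives $c=u^2+qv^2$ (or $(u^2+qv^2)/2$) and $c=r^2+qs^2$ (or the halved version) for coprime pairs $(u,v)$ and $(r,s)$. The task then reduces to showing that a prime $c$ has an essentially unique representation as $u^2+qv^2$. I would phrase this as a lemma about the number of inequivalent representations of a prime by the principal form and reduce the claim to it.

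The main obstacle I anticipate is the failure of unique factorization in $\Inte[\sqrt{-q}]$ for several of the relevant values of $q$ (for instance $q=6$, where the class number exceeds one, and the non-maximal-order subtleties when $q\equiv 3 \pmod 4$ so that $\Inte[\sqrt{-q}]$ is not the full ring of integers). To handle this cleanly I expect to argue at the level of \emph{ideals} rather than elements: the ideal $(\pi)$ of norm $c$ is prime, any ideal of norm $c^2$ is one of $(\pi)^2$, $(\bar\pi)^2$, $(\pi)(\bar\pi)$, and these genuinely are distinct even when none is principal. The essential uniqueness of the representing pair $(x,y)$ up to sign then follows from the uniqueness of the ideal factorization, combined with the fact that whenever a principal ideal of this norm exists its only generators are $\pm$ a fixed element. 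Carefully separating the ideal-theoretic uniqueness from the (possibly false) element-level UFD claim is the delicate point, but it is exactly what makes the argument go through uniformly for all square-free $q\geq 2$.
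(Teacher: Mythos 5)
Your plan is sound in outline, and your ideal-theoretic device does correctly neutralize the class-number worry: for $q=6$, say, a prime $c$ with $c^2=x^2+6y^2$ may well have $\mathfrak{p}$ non-principal (e.g. $c=5$, represented by $2x^2+3y^2$), yet the equality $(x+y\omega)=\mathfrak{p}^2$ is an equality of ideals whose squares are principal, and your argument goes through. The place where the proposal genuinely breaks is not the class group but the unit group and the non-maximal order, i.e. exactly the case $q=3$ --- which the Claim must cover, since $(1,1,2)\in T_3$ (the generator of the torsion of $\P_3$) satisfies the hypotheses with $c=2$. Concretely, your step ``the hypothesis $abxy\neq 0$ rules out the inert case (an inert $c$ would force $y=b=0$)'' is false: $2$ is inert in $\Rati(\sqrt{-3})$ (as $-3\equiv 5\pmod 8$), and yet $1^2+3\cdot 1^2=2^2$. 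There is no contradiction because $1+\sqrt{-3}=2\cdot\frac{1+\sqrt{-3}}{2}$ and $\frac{1+\sqrt{-3}}{2}$ is a unit of the maximal order $\Inte\bigl[\frac{1+\sqrt{-3}}{2}\bigr]$: an element of norm $c^2$ can be an associate of the inert prime $c$ without its $\sqrt{-q}$-coordinate vanishing. The same phenomenon invalidates your unit count: unique factorization of ideals (your trichotomy $\mathfrak{p}^2,\bar{\mathfrak{p}}^2,\mathfrak{p}\bar{\mathfrak{p}}$) is Dedekind-domain theory and lives in the maximal order, so ``being associates'' at the end produces a unit of the maximal order; for $q\equiv 3\pmod 4$ that ring is strictly larger than $\Inte[\sqrt{-q}]$, and for $q=3$ it has the six units $\pm1,\pm\zeta_6,\pm\zeta_6^2$. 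Your remark that the only units of $\Inte[\omega]$ are $\pm1$ is true for the order but irrelevant for the ring in which your factorization runs, so the final step ``associate $\Rightarrow$ equal up to sign'' is unproven precisely where it can fail.

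Both gaps are repairable, and with the repairs your route becomes a correct, self-contained alternative to the paper's proof, which simply cites Lemma 5.48 of Weintraub (an element-level divisibility argument close in spirit to your second, enumeration-based route, whose key uniqueness lemma you likewise leave unproven). For odd $c$ the ideal $(x+y\sqrt{-q})$ has norm $c^2$ prime to $2$, hence prime to the conductor of $\Inte[\sqrt{-q}]$ in the maximal order, so the whole factorization argument can be run with ideals of $\Inte[\sqrt{-q}]$ itself, whose units really are $\pm1$; alternatively, for $q=3$ one can exclude the units $\pm\zeta_6^{\pm1}$ by parity ($c$ odd forces $a\not\equiv b\pmod 2$, so $\zeta_6^{\pm1}(a+b\sqrt{-3})$ has half-integer coordinates and cannot equal $x+y\sqrt{-3}$). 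Finally, $c=2$ with $abxy\neq 0$ forces $qy^2\leq 3$, hence $q=3$, where all solutions of $x^2+3y^2=4$ are $(\pm1,\pm1)$ and the Claim holds by inspection. As submitted, however, the argument fails at $q=3$.
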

\begin{proof}
We apply Lemma 5.48 from \S 5.5. of \cite{Weintraub}. When
$2c=u^2+qv^2$ the proof needs an additional argument explaining why
not just $\beta/\alpha_0$ but $\beta/(2\alpha_0)$ will be in the
ring of integers. It can be easily done considering separate cases
of even and odd $q$ and using the fact that if $q$ is odd, then $u$
and $v$ used in the enumeration are both odd, and if $q$ is even,
then $u$ will be even and $v$ will be odd. We leave details to the
reader.
\end{proof}

We will use these results when we discuss generators of $\P_q$
below, but first we will find for which $q>1$ the group $\P_q$ will
have elements of finite order.

\subsection{Torsion in $\P_q$}

We follow Eckert's geometric argument (\cite{Eckert}, page 24) to
understand the torsion of $\P_q$.

\begin{lem}
If $q=2$ or $q>3$, then $\P_q$ is torsionfree. $\P_3\cong\F_3 \oplus
\Inte/3\Inte$, where $\F_3$ is a free abelian group.
\end{lem}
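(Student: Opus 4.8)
The plan is to analyze torsion by translating the problem into the language of Gaussian-type integers in the ring $\Inte[\sqrt{-q}]$. A triple $(a,b,c)\in T_q$ with $a^2+qb^2=c^2$ corresponds to the element $a+b\sqrt{-q}$ of norm $c^2$, and the group operation $[a,b,c]+[A,B,C]=[aA-qbB,aB+bA,cC]$ is exactly complex multiplication of $a+b\sqrt{-q}$ by $A+B\sqrt{-q}$. Thus $[a,b,c]$ is an element of order $n$ (or divides $n$) precisely when $(a+b\sqrt{-q})^n$ is a positive real multiple of a nonnegative real number, i.e. when the argument $\theta=\arg(a+b\sqrt{-q})$ satisfies $n\theta\in\pi\Inte$. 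Geometrically, following Eckert, each class is a rational point $(a/c,b/c\cdot\sqrt q)$ — or better, a point on the unit circle after the substitution $X=a/c$, $Y=\sqrt q\,b/c$ — and torsion elements are exactly those corresponding to roots of unity that lie on the curve.

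First I would make the reduction to roots of unity precise: $[a,b,c]$ has finite order iff the complex number $w=(a+b\sqrt{-q})/c$, which has modulus $1$, is a root of unity. So the entire torsion question becomes: for which $q$ does the unit circle contain a root of unity whose real part is rational of the form $a/c$ and whose imaginary part is $\sqrt q\cdot b/c$ with $a,b,c$ integers satisfying the equation? Since a root of unity $w=e^{i\theta}$ has $\cos\theta=a/c\in\Rati$, the classical Niven-type theorem tells us the only rational values of $\cos\theta$ at rational multiples of $\pi$ are $0,\pm1,\pm1/2$. This drastically limits the possibilities. The case $\cos\theta=\pm1$ gives the identity $[1,0,1]$ or $[-1,0,1]$ (order dividing $2$, but $[-1,0,1]=[1,0,1]$ after checking, so trivial or the sign issue); $\cos\theta=0$ forces $a=0$, which is excluded since primitive triples with $a=0$ would need $qb^2=c^2$ impossible for square-free $q>1$; and $\cos\theta=\pm1/2$ is the decisive case.

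Next I would examine $\cos\theta=\pm1/2$ carefully. Here $a/c=\pm1/2$, so $c=2a'$ with $a=\pm a'$, and then $\sin^2\theta=3/4$ forces $q b^2/c^2=3/4$, i.e. $qb^2=3a'^2$ with $c=2a'$. For this to have a primitive integer solution we need $q=3$: indeed $qb^2=3a'^2$ with $q$ square-free forces $q=3$ (taking $b=a'=1$ gives the triple $(1,1,2)$), whereas for square-free $q\ne3$ the equation $qb^2=3(c/2)^2$ has no nontrivial solution. Thus when $q\ne 3$ the only roots of unity that appear are $\pm1$, giving trivial torsion, so $\P_q$ is torsionfree; this covers $q=2$ and all $q>3$. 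When $q=3$, the element $[1,1,2]$ (corresponding to $e^{i\pi/3}$, a primitive $6$th root of unity, whose argument is $\pi/3$) generates a torsion subgroup: computing its order, $e^{i\pi/3}$ has order $6$ as a root of unity, but one must check the order in $\P_q$ where $[a,b,c]=[-a,-b,c]$ identifies $w$ with $-w$, effectively quotienting by $\pm1$, so the order becomes $3$. This is why $\P_3$ has a $\Inte/3\Inte$ summand generated by $[1,1,2]$.

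Finally, to obtain the direct sum decomposition $\P_3\cong\F_3\oplus\Inte/3\Inte$ I would invoke the structure theorem for abelian groups: having identified the torsion subgroup $T$ as cyclic of order $3$, I would argue that $T$ is a direct summand. For a general abelian group a finite torsion subgroup need not split off, so the key point is that the quotient $\P_3/T$ is free abelian (this will follow from the work in the later sections identifying explicit free generators), and any extension of a free abelian group by any abelian group splits, giving $\P_3\cong\F_3\oplus\Inte/3\Inte$ with $\F_3:=\P_3/T$ free abelian. The main obstacle I anticipate is the splitting argument at the very end: establishing that the torsion-free quotient is genuinely free abelian (not merely torsion-free) so that the extension splits. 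Everything upstream — the reduction to roots of unity and the Niven-type determination of rational cosines — is clean, but the freeness of $\F_3$ really relies on the explicit generator analysis carried out elsewhere in the paper, so I would either forward-reference that or supply an independent argument that the torsion subgroup is a direct summand.
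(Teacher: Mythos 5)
Your proposal is correct and follows essentially the same route as the paper: identify $[a,b,c]$ with the point $e^{i\theta}=(a/c)+i\sqrt{q}\,b/c$ on the unit circle, reduce finite order to $\theta$ being a rational multiple of $\pi$, invoke Niven's theorem that the only rational values of $\cos\theta$ are then $0,\pm\tfrac{1}{2},\pm1$, and run the same case analysis isolating $q=3$ and the order-$3$ elements $[1,\pm1,2]$. Your only departures are presentational: you replace the paper's rolling-circle/hypocycloid justification of the rationality of the angle with the cleaner root-of-unity formulation, and you are more explicit than the paper about why the $\Inte/3\Inte$ torsion splits off (the paper simply asserts that $\P_3/(\Inte/3\Inte)$ is free abelian, whereas you correctly note this requires the freeness, not mere torsion-freeness, of the quotient).
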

\begin{proof}
Let us assume that $q\geq 2$, and suppose the triple $(a,b,c)$ is a
solution of (\ref{main}), that is we can identify point
$(a/c,\sqrt{q}\cdot b/c)$ with $e^{i\alpha}$ on the unit circle
${\bf U}$. Then a circle $S_r^1$ with radius $r=\alpha/(2\pi)$ is
made to roll inside ${\bf U}$ in the counterclockwise direction. The
radius $r$ is chosen this way so that the length of the circle
$S_r^1$ equals length of the smaller arc of ${\bf U}$ between the
points $e^{i\alpha}$ and $e^0=(1,0)$. Let us denote the point
$(1,0)$ by $P$ and assume that this point moves inside the unit disk
when $S_r^1$ rolls inside ${\bf U}$. When $1=kr$ for some positive
integer $k$, this point $P$ traces out a curve known as a
hypocycloid. In this case the point $P$ will mark off $k-1$ distinct
points on ${\bf U}$ and will return to its initial position $(1,0)$
so the hypocycloid will have exactly $k$ cusps. If $P$ doesn't
return to $(1,0)$ after the first revolution around the origin, it
might come back to $(1,0)$ after, say $n$, such revolutions. In that
case $n\cdot 2\pi = m\cdot \alpha$, for some $m\in\Natu$. Thus,
$\alpha$ is a rational multiple of $\pi$, or to be more precise,
$$
\alpha = \pi\cdot \frac{2n}{m}
$$
Due to Corollary 3.12 of \cite{Niven} (see Ch.3, Sec.5), in such a
case the only possible rational values of $\cos(\alpha)$ are
$0,\pm\frac{1}{2}, \pm1$. Since $\cos(\alpha) =a/c$, where $a\neq
0$, we see that $\P_q$ might have a torsion only if $a/c = \pm1/2$
or $a/c=\pm1$. In the latter case we must have $q\cdot b^2 = 0$,
which implies that the element $[a,b,c]$ is the identity of $\P_q$.
Suppose now $a/c = \pm1/2$. Then $qb^2=3a^2$ and if $3\neq q$ we
will have a prime $t\neq 3$ dividing $q$. We can assume without loss
of generality that $\gcd(a,b)=1$, hence we obtain $t|a$ and
therefore $t^2|qb^2$. Since $q$ is square-free, we must have
$t|b^2$, which contradicts that $\gcd(a,b)=1$. Therefore if $q=2$ or
$q>3$, $\P_q$ is torsionfree. Suppose now $q=3$. Then we obtain
$a=\pm b$ and we can multiply $[a,b,c]$ by $-1$, if needed, to
conclude that $[a,b,c]=[1,1,2]$ or $[a,b,c]=[1,-1,2]$. We have
$\langle [a,b,c]\rangle \cong \Inte/3\Inte$ in both these cases. It
implies that $\P_3/(\Inte/3\Inte)$ is free abelian and hence
$\P_3\cong\F_3 \oplus \Inte/3\Inte$.
\end{proof}

\noindent \underline{Remark 3}: There is a another way to obtain
this lemma via a different approach to the group $\P_q,~q > 0$. The
authors are very thankful to Wladyslaw Narkiewicz who explained this
alternative viewpoint to us (cf. also with \cite{Baldisserri}).
Consider an imaginary quadratic field $\Rati(\sqrt{-q})$ and the
multiplicative subgroup of non-zero elements whose norm is a square
of a rational number. Let us denote this subgroup by $\A_q$.
Obviously $\Rati^*\subset \A_q$ ($\Rati^*$ denotes the group of
non-zero rational numbers). It is easy to see that $\P_q\cong
\A_q/{\Rati^*}$, and it follows from Theorem A. of Schenkman (see
\cite{Schenkman}) that $\A_q$ is a direct product of cyclic groups.
Hence the same holds for $\P_q$. If $q=1$ or $q=3$ the group $\A_q$
will have elements of finite order since the field
$\Rati(\sqrt{-q})$ has units different from $\pm1$. These units will
generate in $\P_q$ the torsion factors $\Inte/2\Inte$ or
$\Inte/3\Inte$, when $q=1$ or $q=3$ respectively.

\subsection{On generators of $\P_q$ when $q\leq 6$}

In this subsection we assume that $2\leq q \leq 6$, and will
describe the generators of $\P_q$ similar to the way it was done by
Eckert in his proposition on pages 25 and 26 of \cite{Eckert}. We
will use $\F_q$ to denote the free subgroup of $\P_q$. As follows
from 3.1 above, $\F_q = \P_q$, for $q\neq 3$, and $\P_3\cong
\F_3\oplus(\Inte/3\Inte)$.

The key point in Eckert's description of the generators of the group
of primitive pythagorean triples is the fact that a prime $p$ can be
a hypothenuse in a pythagorean triangle if and only if $p\equiv 1
\pmod{4}$. Our next lemma generalizes this fact to the cases of
primitive triples from $T_q$, with $q\in\{2,3,5,6\}$.

\begin{lem}
If $(a,b,c) \in T_2$ is primitive and $p$ is a prime divisor of $c$,
then there exist $u, v \in \Inte$ such that $p = u^2 + 2v^2$. If
$(a,b,c) \in T_3$ is primitive and $p$ is a prime divisor of $c$,
then either $p = 2$ or there exist $u, v \in \Inte$ such that $p =
u^2 +3v^2$. If $(a,b,c) \in T_q$ is primitive where $q =5$ or $q=6$,
and $p$ is a prime divisor of $c$, then $\exists u, v \in \Inte$
such that $p = u^2 +qv^2$ or $2p= u^2 +qv^2$.
\end{lem}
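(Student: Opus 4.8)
The plan is to reduce the statement to a classical question about which primes are represented by the principal binary quadratic form of discriminant $-4q$, and then to invoke the theory of such forms for the four specific values $q\in\{2,3,5,6\}$.

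First I would extract an arithmetic condition on $p$ from the hypothesis. Since $(a,b,c)$ is primitive and $p\mid c$, the relation $a^2+qb^2=c^2$ forces $p\nmid b$: otherwise $p\mid a^2$, so $p\mid a$ and $p\mid\gcd(a,b,c)$, a contradiction. Next I would show $p\nmid q$. Indeed, if $p\mid q$ then, as $q$ is square-free, $p\,\|\,q$; writing $q=pm$ with $\gcd(p,m)=1$ and reducing $a^2=c^2-qb^2$ first modulo $p$ (giving $p\mid a$) and then modulo $p^2$ (giving $p^2\mid qb^2=pmb^2$, hence $p\mid b$) contradicts $p\nmid b$. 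Consequently, for odd $p$ the congruence $a^2\equiv -qb^2\pmod p$ with $b$ invertible and $-q\not\equiv 0$ yields $\left(\frac{-q}{p}\right)=1$, equivalently $\left(\frac{-4q}{p}\right)=1$, so $p$ is represented by some primitive form of discriminant $-4q$. The prime $p=2$ must be treated by hand: a short check modulo $4$ shows that $c$ is always odd when $q\in\{2,5,6\}$, whereas for $q=3$ the triple $(1,1,2)$ shows that $2\mid c$ does occur, and since $2$ is not of the form $u^2+3v^2$ this is exactly the exception recorded in the statement.

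The second step is to list the reduced forms of discriminant $-4q$ for each $q$. For $q=2$ and $q=3$ the class number is $1$, so the only primitive form is the principal one, $x^2+2y^2$ respectively $x^2+3y^2$; hence every odd $p\mid c$ is directly of the shape $u^2+qv^2$. For $q=5$ and $q=6$ the class number is $2$, with reduced forms $x^2+5y^2,\ 2x^2+2xy+3y^2$ and $x^2+6y^2,\ 2x^2+3y^2$ respectively. Thus $p$ is represented either by the principal form, giving $p=u^2+qv^2$, or by the non-principal one, in which case the identities
$$
2(2x^2+2xy+3y^2)=(2x+y)^2+5y^2,\qquad 2(2x^2+3y^2)=(2x)^2+6y^2
$$
turn a representation of $p$ into a representation $2p=u^2+qv^2$. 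This accounts for the alternative $2p=u^2+qv^2$ in the statement.

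The main obstacle is the representability criterion itself, namely the fact that $\left(\frac{-q}{p}\right)=1$ guarantees that $p$ is properly represented by \emph{some} primitive form of discriminant $-4q$, together with the correct enumeration of reduced forms and the verification that the class number equals $1$ for $q\in\{2,3\}$ and $2$ for $q\in\{5,6\}$. I expect to cite these from a standard source (e.g. the framework of \cite{Weintraub} already used for the preceding Claim, or a text on binary quadratic forms) rather than reprove them. Note that no genus theory is needed to decide \emph{which} of the two classes represents $p$ when the class number is $2$, since the conclusion of the lemma permits either the principal representation $p=u^2+qv^2$ or the non-principal one $2p=u^2+qv^2$.
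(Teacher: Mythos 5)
Your proposal is correct, and its overall skeleton matches the paper's proof: reduce to the condition $\left(\tfrac{-q}{p}\right)=1$ for odd primes $p\mid c$, then invoke the classification of reduced primitive forms of discriminant $-4q$ (class number $1$ for $q\in\{2,3\}$, class number $2$ for $q\in\{5,6\}$, via Corollary 2.6 and the reduction algorithm of Cox) together with the doubling identities that convert a representation by the non-principal form into $2p=u^2+qv^2$. The genuine difference is in how the Legendre-symbol condition is obtained. The paper first writes $c=s^2+qt^2$ or $2c=s^2+qt^2$ using the generalized Diophantus chord enumeration of primitive triples, and then reads off $-qt^2\equiv s^2\pmod{p}$ from $p\mid c$ (with $p\nmid t$ guaranteed by $\gcd(s,t)=1$). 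You instead work directly with $a^2+qb^2=c^2$: primitivity gives $p\nmid b$, your mod-$p^2$ argument gives $p\nmid q$, and then $a^2\equiv -qb^2\pmod{p}$ yields the symbol. This makes the lemma self-contained--independent of the enumeration result the paper relies on--at the cost of the two auxiliary verifications, which you supply correctly. Two smaller differences are also worth noting: for $q\in\{2,3\}$ the paper routes through the linear congruence characterizations ($p\equiv 1,3\pmod 8$, resp.\ $p\equiv 1\pmod 3$) and the corresponding classical representation theorems, whereas you uniformly appeal to class number $1$, which is cleaner and parallel to the $q\in\{5,6\}$ cases; and your explicit mod-$4$ treatment of the prime $2$ (showing $c$ is odd for $q\in\{2,5,6\}$ and isolating the $q=3$ exception via $(1,1,2)$) is more careful than the paper's parenthetical remark that $p_i\neq 2$ when $q$ is even.
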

\begin{proof}
Consider $(a,b,c) \in T_q$. Since $a^2 + qb^2 = c^2$ where $q \in \{
2, 3, 5, 6\}$, it follows from the generalized Diophantus chord
method that $\exists s, t \in \Inte$ such that $c = s^2 + qt^2$ or
$2c=s^2 + qt^2$. Suppose $c = p_1^{n_1} \cdot \ldots \cdot
p_k^{n_k}$, is the prime decomposition of $c$.

\noindent \underline{Case 1}: $q = 2$. We want to show that each
prime $p_i$ dividing $c$ can be written in the form $p_i = u^2
+2v^2$ for some $u, v \in \Inte$ (note that if $q$ is even, $p_i\neq
2$). It is well known that a prime $p$ can be written in the form
$$
p = u^2 +2v^2 \iff p = 8n +1 ~~\mbox{or}~~ p = 8n+3,~~ \mbox{for
some integer} ~n
$$ (see chapter 9 of \cite{Stillwell}, or chapter 1 of
\cite{Cox}). Thus it's enough to show that if a prime $p \vert c$
then $p = 8n +1$ or $p = 8n+3$. Since $p \vert c$, and $c = s^2 +
qt^2$ or $2c=s^2 + qt^2$ we see that $\exists m \in \Inte$ such that
$pm = s^2 +2t^2$ and hence $-2t^2 \equiv s^2\pmod{p}$, i.e. the
Legendre Symbol $(\frac{-2t^2}{p}) = 1$. Using basic properties of
the Legendre symbol, it implies that $(\frac{-2}{p})=1$. But
$(\frac{-2}{p})=1$ iff $p = 8n +1$ or $p = 8n+3$ as follows from the
supplements to quadratic reciprocity law. This finishes the case
with $q = 2$.

\noindent \underline{Case 2}: Suppose now that $q=3$. Then $(1,1,2)
\in T_3$ gives an example when $c$ is divisible by prime $p = 2$.
Note also that prime $p=2$ is of the form $2p = u^2 +3v^2$. Assuming
from now on that prime $p$ dividing $c$ is odd, we want to show that
there exist $u, v \in \Inte$ such that $p = u^2+3v^2$, which is true
if and only if $\exists n \in \Inte$ such that $p = 3n+1$ (see again
\cite{Stillwell} or \cite{Cox}). Hence, in our case, it suffices to
show that if $p\vert c$ then $\exists n \in \Inte$ such that $p =
3n+1$. As in Case 1, $\exists m \in \Inte$ such that $pm = s^2
+3t^2$ for some $s,t \in \Inte$. Therefore, we have that the
Legendre Symbol $(\frac{-3}{p}) = 1$, which holds iff $p = 3n+1$.
One can prove this using the quadratic reciprocity law (e.g.
\cite{Stillwell}, Section 6.8).

\noindent \underline{Case 3}: Suppose now that $q = 5$. Note that in
this case $c$ must be odd. Indeed, if $c$ was even, $x^2+5y^2$ would
be divisible by 4, but on the other hand, since both of $x$ and $y$
must be odd when $q$ is odd and $c$ is even, we see that
$x^2+5y^2\not\equiv 0\pmod{4}$. Since $p \vert c$ then again
$\exists m \in\Inte$ such that $pm = s^2 +5t^2$ for some $s,t \in
\Inte$. I.e. $(\frac{-5}{p}) = 1$. It is true that for any integer
$n$ and odd prime $p$ not dividing $n$ that Legendre Symbol
$(\frac{-n}{p}) = 1$ iff $p$ is represented by a primitive form
$ax^2 +bxy + cy^2$ of discriminant $-4n$ such that $a,b,$ and $c$
are relatively prime (see Corollary 2.6 of \cite{Cox}). Following an
algorithm in \S 2.A of \cite{Cox} to show that every primitive
quadratic form is equivalent to a reduced from, one can show that
the only two primitive reduced forms of discriminant $-4 \cdot 5 =
-20$ are $x^2+5y^2$ and $2x^2 +2xy + 3y^2$. Through a simple
calculation its easy to see that a prime $p$ is of the form
$$
p=2x^2 +2xy + 3y^2~~\iff~~ 2p = x^2+5y^2.
$$
This finishes the third case.

\noindent \underline{Case 4}: Lastly, let's consider the case when
$q = 6$. Once again since $p \neq 2$ and $p  \vert c$ then
$(\frac{-6}{p}) = 1$. Using the same Corollary used in case 3, we
see that $p$ must be represented by a primitive quadratic form of
discriminant $-4 \cdot 6 = -24$. Also, following the same algorithm
used in case 3 to determine such primitive reduced forms, we find
that there are only two; $x^2+6y^2$ and $2x^2 + 3y^2$. Through a
simple calculation it can be determined that a prime $p$ is of the
from
$$
p = 2x^2 + 3y^2 ~~\iff~~2p = x^2+ 6y^2.
$$
Thus, the lemma is proven.
\end{proof}

\noindent \underline{Remark 4}: One could write prime divisors from
this lemma in a linear form if needed. It is a famous problem of
classical number theory which primes can be expressed in the form
$x^2+ny^2$. The reader will find a complete solution of this problem
in the book \cite{Cox} by Cox. For example, if $p$ is prime, then
for some $n\in\Inte$ we have
$$
p = \left\{ \begin{matrix} 20n+1 \\ 20n+3 \\20n+7
\\20n+9 \end{matrix} \right.
$$
if and only if $p=x^2+5y^2$ or $p=2x^2+2xy+3y^2$. We refer the
reader for the details to chapter 1 of \cite{Cox}.

Now we are ready to describe all generators of $\P_q$, where
$q\in\{2,3,5,6\}$. Our proof is similar to the proof given in
\cite{Eckert} by Eckert, where he decomposes the hypothenuse of a
right triangle into the product of primes and after that peels off
one prime at a time, together with the corresponding sides of the
right triangle. His description of prime $p \equiv 1 \pmod{4}$ is
equivalent to the statement that $p$ can be written in the form
$p=u^2+v^2$, for some integers $u$ and $v$, which is the case of
Fermat's two square theorem. In the theorem below we also use
quadratic forms for the primes.

\begin{thm}
Let us fix $q \in \{2,3,5,6\}$. Then $\P_q$ is generated by the set
of all triples $(a,b,p) \in T_q$ where $a>0$, and $p$ is prime such
that $\exists u,v \in \Inte$ with $p=u^2 +qv^2$, or $2p=u^2 +qv^2$.
\end{thm}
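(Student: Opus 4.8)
The plan is to adapt Eckert's ``prime-peeling'' argument, proceeding by induction on the third coordinate $c$ of a primitive triple $(a,b,c)\in T_q$ with $a>0$. The base case $c=1$ gives $a^2+qb^2=1$, hence $b=0$, $a=1$, and $[1,0,1]$ is the identity. For $c>1$ I would fix a prime divisor $p\mid c$. By Lemma 2 either $p=u^2+qv^2$ or $2p=u^2+qv^2$ for suitable $u,v\in\Inte$, and the generalized enumeration then exhibits a triple of third coordinate $p$: namely $(u^2-qv^2,\,2uv,\,p)$ in the first case and $\bigl(\tfrac{u^2-qv^2}{2},\,uv,\,p\bigr)$ in the second. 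Replacing it by its $a>0$ representative, this is an element $g_p$ of the proposed generating set. The goal is to write $[a,b,c]=\pm[g_p]+[a',b',c']$ with $c'$ a proper divisor of $c$, so that the inductive hypothesis finishes the job.

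The crux is this reduction step, which I would carry out in the quadratic ring $\Inte[\sqrt{-q}]$ (in $\Inte[\tfrac{1+\sqrt{-3}}{2}]$ when $q=3$), using the norm-multiplicative identification $[a,b,c]\leftrightarrow z:=a+b\sqrt{-q}$ following Remark 3, under which the group law becomes multiplication and $N(z)=c^2$. Factoring the ideal $(z)$ and letting $\mathfrak p$ be a prime above $p$, primitivity of $(a,b,c)$ forces that exactly one of $\mathfrak p,\overline{\mathfrak p}$ divides $(z)$; otherwise $(p)=\mathfrak p\overline{\mathfrak p}$ would divide $(z)$, giving $p\mid\gcd(a,b,c)$. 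Since $g_p$ corresponds to a generator $w:=a_0+b_0\sqrt{-q}$ of $\mathfrak p^{2}$, and after possibly renaming $\mathfrak p\leftrightarrow\overline{\mathfrak p}$ we may assume $\mathfrak p\mid(z)$, the difference $[a,b,c]-[g_p]$ corresponds to $z\overline w=(aa_0+qbb_0)+(ba_0-ab_0)\sqrt{-q}$, whose ideal $(z)\,\overline{\mathfrak p}^{\,2}=(p)^{2}\,\mathfrak p^{\,2n-2}(\cdots)$ is divisible by $(p)^{2}$. Hence $p^{2}$ divides both coordinates, and dividing out this rational factor leaves a triple of third coordinate $c/p<c$, exactly the reduction we need.

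The uniqueness statement (our Claim) shows that, up to sign, $g_p$ and its inverse are the only triples of prime third coordinate $p$, so in practice one just computes both of $[a,b,c]\pm[g_p]$ and keeps the one whose coordinates are divisible by $p^2$. Two situations deserve a separate look. For $q\in\{2,5,6\}$ the third coordinate is always odd (as the proof of Lemma 2 records), so only odd primes occur and no prime $2$ enters. For $q=3$ the prime $2$ can divide $c$; here $2p=1^2+3\cdot1^2$ gives $g_2=(1,1,2)$, which is precisely the order-$3$ torsion generator from Lemma 1, so including it simultaneously furnishes the torsion summand $\Inte/3\Inte$ and drives the reduction---for instance $[13,3,14]+[1,1,2]=[4,16,28]=[1,4,7]$ has third coordinate $7$.

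The step I expect to fight hardest for is the class-number-two case $q\in\{5,6\}$ with an odd prime $p\mid c$ satisfying $2p=u^2+qv^2$. Then the prime $\mathfrak p$ above $p$ is non-principal, so no triple of third coordinate $p$ comes directly from $\mathfrak p$; one must instead use that $\mathfrak p^{2}$ is principal---which holds exactly because the class group has order $2$---to produce $g_p$ via the half-integer parametrization, and then check that $(a_0,b_0,p)$ is a genuine nontrivial primitive triple. The saving grace is that the ideal identity $(z)\,\overline{\mathfrak p}^{\,2}=(p)^{2}\,\mathfrak p^{\,2n-2}(\cdots)$ is indifferent to whether $\mathfrak p$ is principal, so once $g_p$ is in hand the $p^{2}$-divisibility and the reduction proceed verbatim. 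A minor loose end is verifying that the reduced triple is itself primitive, but since the induction needs only that its third coordinate strictly drops, even a non-primitive outcome causes no harm.
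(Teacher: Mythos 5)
Your overall strategy --- induct on $c$, use Lemma 2 to manufacture a triple $g_p$ with prime third coordinate $p\mid c$, and show that one of $[a,b,c]\pm[g_p]$ has both coordinates divisible by $p^2$, so that the third coordinate drops to $c/p$ --- is exactly the paper's peeling argument; what differs is the engine behind the divisibility step. The paper uses the elementary identity $(sa-rb)(sa+rb)=s^2(a^2+qb^2)-b^2(r^2+qs^2)\equiv 0\pmod{p^2}$ together with primitivity (either $p^2$ divides exactly one factor, or $p\mid 2sa$, impossible for odd $p$), whereas you factor ideals in the maximal order of $\Rati(\sqrt{-q})$. For \emph{odd} primes $p$ your route is correct and arguably more illuminating: it explains structurally why exactly one sign works (according to which of $\mathfrak p$, $\overline{\mathfrak p}$ divides $(z)$), and it treats the class-number-two cases $q=5,6$ uniformly, since only principality of $\mathfrak p^2$, never of $\mathfrak p$, is used. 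One small repair: you rename the conjugate primes twice, once to arrange $(w)=\mathfrak p^2$ and once to arrange $\mathfrak p\mid (z)$, and these renamings can conflict; the fix is simply that in the conflicting case $zw$ rather than $z\overline w$ is the product divisible by $(p)^2$, i.e.\ you add $g_p$ instead of subtracting it, which your ``compute both signs'' remark already accommodates.

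The genuine gap is the case $q=3$, $p=2$, and your worked example does not close it. In $\Inte[\omega]$, $\omega=(1+\sqrt{-3})/2$, the prime $2$ is \emph{inert}, so $\mathfrak p=\overline{\mathfrak p}=(2)$ and your dichotomy ``exactly one of $\mathfrak p,\overline{\mathfrak p}$ divides $(z)$, else $p\mid\gcd(a,b,c)$'' fails: a primitive $(a,b,c)\in T_3$ with $c$ even has $a,b$ odd, hence $z/2=(a+b\sqrt{-3})/2\in\Inte[\omega]$, so $(2)\mid(z)$ with no contradiction to primitivity. Worse, $w=1+\sqrt{-3}=2\omega$ generates $(2)$ itself, so $(zw)$ and $(z\overline w)$ are the \emph{same} ideal $(z)(2)$, both divisible by $(4)$; the ideal computation therefore cannot distinguish the two signs, even though only one of them reduces the third coordinate. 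The underlying reason is that divisibility by $(4)$ in $\Inte[\omega]$ does not force the rational-integer coordinates to be divisible by $4$, because half-integers are allowed. Concretely, with your own example, $[13,3,14]+[1,1,2]=[1,4,7]$ reduces, but the other choice $[13,3,14]-[1,1,2]=[22,-10,28]=[11,-5,14]$ has third coordinate still $14$. So for this one prime you need the paper's congruence argument: with $a,b$ odd, exactly one of $a+b$, $a-b$ is $\equiv 0\pmod 4$, and accordingly exactly one of the integer triples $(a-3b,\,a+b,\,2c)$, $(a+3b,\,b-a,\,2c)$ has all entries divisible by $4$, yielding third coordinate $c/2$. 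With that mod-$4$ analysis added, your proof is complete.
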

\begin{proof}
Take arbitrary $[r, s, d] \in \P_q$ and let us assume that $(r,s,d)
\in T_q$ will be the corresponding primitive triple with $r>0$, and
the following prime decomposition of $d = p_1^{n_1} \cdot \ldots
\cdot p_k^{n_k}$. It is clear from what we've said above that $d$
will be odd when $[r,s,d] \in \F_q$, and $d$ will be even only if
$q=3$ and $[r,s,d] \notin \F_3$. Our goal is to show that
$$
[r,s,d]  = \sum \limits_{i=1}^{k} n_i\cdot[a_i, b_i,
p_i],~\mbox{where}~a_i > 0,~~n_i\cdot[a_i,b_i,p_i] :=
\underbrace{[a_i, b_i, p_i] + \cdots + [a_i, b_i, p_i]}_{\mbox{$n_i$
times}}
$$
and $p_i$ is either of the form $u^2 +qv^2$, or of the form
$(u^2+qv^2)/2$. We deduce from our Lemma 2 that each prime $p_i~|~d$
can be written in one of these two forms. Hence, for all
$p_i,~\exists a_i, b_i \in \Inte$ such that $ a_i^2 + qb_i^2 =
p_i^2$. Indeed, if we have $2p=u^2+qv^2$, then
$$
4p^2 = (u^2-qv^2)^2 + 4q(uv)^2
$$
and since $u^2+qv^2$ is even, $u^2-qv^2$ will be even as well, and
therefore we could write $\alpha^2 + q\beta^2 = p^2$, where
$\alpha=(u^2-qv^2)/2$ and $\beta=uv$. Thus $[a_i,b_i,p_i] \in \P_q$.

\noindent Since $\P_q$ is a group, the equations
$$
[r,s,d] = \left\{
\begin{array}{l}
[X_1, Y_1, D_1] + [a_k,b_k,p_k] \\

[X_2, Y_2, D_2] + [-a_k,b_k,p_k] \\
\end{array} \right.
$$

\noindent always have a solution with $(X_i,Y_i,D_i) \in \Inte
\times \Inte\times\Natu$. The key observation now is that only one
of the triples $(X_i,Y_i,D_i)$ will be equivalent to a primitive
triple $(x,y,d_1)$, with $d_1<d$. Indeed, we have $ [r,s,d] = [X, Y,
D] \pm [a,b,p] $ or
$$
[X,Y,D] = [r,s,d] \pm [-a,b,p] = \left\{
\begin{array}{lcc}
[-ra-qsb,rb-sa,dp] \\

[ra-qsb,rb+sa,dp] \\
\end{array}\right.
$$

\noindent Since $p~|~d$, we have $dp\equiv0\pmod{p^2}$ and hence it
is enough to show that either $ra+qsb\equiv rb-sa\equiv 0
\pmod{p^2}$, or $ra-qsb\equiv rb+sa\equiv 0 \pmod{p^2}$ (c.f. Lemma
on page 24 of \cite{Eckert}). From the following identity
$$
(sa-rb)(sa+rb) = s^2a^2-r^2b^2 = s^2(a^2+qb^2) -
b^2(r^2+qs^2)\equiv0\pmod{p^2},
$$
we deduce that either $p$ divides each of $sa-rb$ and $sa+rb$, or
$p^2$ divides exactly one of these two terms. In the first case
$p~|~2sa$, which is impossible if $p$ is odd, since then either
$a^2>p^2$ or $(r,s,d)$ won't be primitive. If we assume $p=2$, then
as we explained in Lemma 2., $q=3$ and therefore $(a,b,p) = (1,1,2)$
so $(ra-qsb,rb+sa,dp) = (r-3s,r+s,2d)$. But $r+s\equiv r-3s\pmod{4}$
and if $4~|~r+s$ we can write $(ra-qsb,rb+sa,2d) =
4\bigl((r-3s)/4,(r+s)/4,d_1\bigr)$, where $d_1=d/2$. If
$r+s\equiv2\pmod{4}$, we will divide each element of the other
triple by 4.

Thus we can assume from now on that $p$ is an odd prime and that
either $p^2~|~sa-rb$ or $p^2~|~sa+rb$. Let us assume without loss of
generality that $sa-rb = kp^2$ for some $k\in\Inte$. Since the
triple $(-ra-qsb,rb-sa,dp)$ is a solution of (\ref{main}), and the
last two elements are divisible by $p^2$, it is obvious that the
first element must be divisible by $p^2$ too, i.e. that $ra+qsb =
tp^2$. That implies that
$$
[X,Y,D] = [-ra-qsb,rb-sa,dp] = [-t, -k, d_1],
$$
where $d_1 = d/p<d$, which we wanted to show. The other case is
solved similarly. Note that only one of the two triples will have
all three elements divisible by 4, which means that only $[a,b,p]$
or $[-a,b,p]$ can be subtracted from the original element $[r,s,d]$
in such a way that the result will be in the required form.

Thus we can ``peel off" the triple $[a_k,b_k,p_k]$ from the original
one $[r,s,d]$ ending up with the element $[x,y,d_1]$, where new $d_1
< d$. Note that we can always assume that $a_k>0$ by using either
$[a_k,b_k,p_k]$ or $[-a_k,-b_k,p_k]$. Then simply keep ``peeling
off" until all prime divisors of $d$ give the required presentation
of the element $[r,s,d]$ as a linear combination of the generators
$[a_i,b_i,p_i]$.
\end{proof}

\noindent \underline{Remark 5}: Since these primes are the
generators of $\P_q$ when $q \in \{ 2, 3, 5, 6 \}$ and each prime
(with exception $p = 2$ when $q = 3$) generates an infinite cyclic
subgroup, it is obvious that $\P_q$ contains an infinite number of
elements.  The same holds for $\P_q$ when $q \geq 7$. This can be
shown through properties of Pell's equation $c^2 -qb^2 = 1$ where
$q$ is a square-free positive integer different from 1. This
equation can be re-written as $c^2 = 1^2 +qb^2$, which is in fact
our equation (\ref{main}) with specific solutions $(1, b, c)$.  It
is a classical fact of number theory that this equation always has a
nontrivial solution and in result, has infinitely many solutions
(see \cite{Weintraub}, Section 4.2 or \cite{Stillwell}, Section
5.9).

\noindent Note that it is not obvious that Pell's equation has a
nontrivial solution for arbitrary $q$. For example, the smallest
solution of the equation
$$
1^2 + 61b^2 = c^2~~\mbox{is}~~b = 226,153,980,~~c = 1,766,319,049.
$$
Let us observe that the equation $a^2+61b^2 = c^2$, where $a$ is
allowed to be any integer, has many solutions with ``smaller"
integer triples. Three examples are [3,16,125], [6,7,55], and
[10,9,71].

\subsection{On generators of $\P_q$ when $q\geq 7$ and the triples $(a,b,2^k)$}

It is interesting to see how the method of peeling off breaks down
in specific cases of $q$ for $q \geq 7$. Here are some examples of
PAPTs $(a,b,c) \in T_q$, where $c$ is divisible by a prime $p$ but
there exist no nontrivial pair $r,s \in \Inte$, such that $(r,s,p)
\in T_q$.

The primitive triple $(9,1,10) \in T_{19}$ is a solution, where 10
is divisible by primes 2 and 5, however, it is impossible to find
nonzero $a,~b \in \Inte$, such that $a^2 + 19b^2 = 5^2$.

The primitive triple $(3,1,4) \in T_7$ is a solution, where 4 is
divisible by prime 2, however, it is impossible to solve $a^2 + 7b^2
= 2^2$ in integers. In $T_{15}$ the primitive triple $(1,1,4)$ is a
problematic solution for the same reason.

It is mentioned in \cite{Baldisserri} (see Observation \#2 on page
304) that if a non-trivial and primitive $(a,b,c)$ solves
(\ref{main}), then $c$ could be even only when $q \equiv 3\pmod{4}$.
Moreover if $q \equiv 3\pmod{8}$, we must have $c=2\cdot\mbox{odd}$,
but if $q \equiv 7\pmod{8}$ we could have $c$ divisible by any power
of $2$. Indeed, as we just mentioned above, the triple $(3,1,4)$
solves (\ref{main}) with $q=7$, and clearly can not be presented as
a sum of two ``smaller" triples. Since $\P_7$ is free, we see that
$(3,1,4)$ must generate a copy of $\Inte$ inside $\P_7$, and one can
easily check that we have
$$
2\cdot[3,1,4] = \pm[1,3,2^3],~~~3\cdot[3,1,4] =\pm[9,5,2^4],~~~
4\cdot[3,1,4] =\pm[31,3,2^5],~~~ \ldots
$$
The same holds for the triple $(1,1,4) \in T_{15}$ but somehow these
two generators of $\P_7$ and $\P_{15}$ are not mentioned in the
theorem 2 of \cite{Baldisserri}.

Can we have more than one such generator for a fixed $q$? In other
words, how many nonintersecting $\Inte$-subgroups of $\P_q$ can
exist, provided that each subgroup is generated by a triple where
$c$ is a power of 2? The following theorem shows that there could be
only one such generator (for the definition of {\it irreducible
solution} we refer the reader to page 304 of \cite{Baldisserri}, but
basically it means that this solution is a generator of the group of
PAPTs).

\begin{thm}
Fix $q$ as above and assume that the triple $(a,b,2^k)$ is an
irreducible solution of (\ref{main}). If $(x,y,2^r)\in T_q$ and
$r\geq k$, then $\exists n ~ \in\Inte$ such that
$$
[x,y,2^r] =  n\cdot [a,b,2^k]
$$
\end{thm}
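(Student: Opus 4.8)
The plan is to work in the imaginary quadratic field $K=\Rati(\sqrt{-q})$ and exploit the isomorphism $\P_q\cong\A_q/\Rati^*$ recalled in Remark 3, under which a triple $[x,y,z]$ corresponds to the class of $\xi=x+y\sqrt{-q}$, whose norm $x^2+qy^2=z^2$ is a square. Since the triple $(a,b,2^k)$ exists with $k\geq 2$, we are in the case $q\equiv 7\pmod 8$ (as noted before the theorem), so $-q\equiv 1\pmod 8$ and the rational prime $2$ splits in the ring of integers $\mathcal{O}$ of $K$, say $(2)=\mathfrak{p}\bar{\mathfrak{p}}$ with $\mathfrak{p}\neq\bar{\mathfrak{p}}$. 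First I would isolate the set $H\subseteq\P_q$ of all classes whose primitive representative has third coordinate a power of $2$. This $H$ is a subgroup: the third coordinate of a sum is $2^{r_1}2^{r_2}=2^{r_1+r_2}$ before reduction, and dividing out the common factor (itself a power of $2$) leaves a power of $2$; inverses clearly stay in $H$. Since $[a,b,2^k]\in H$, the subgroup $H$ is nontrivial.

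Next I would define $\delta\colon H\to\Inte$ by $\delta([\xi])=v_{\mathfrak{p}}(\xi)-v_{\bar{\mathfrak{p}}}(\xi)$, where $v_{\mathfrak{p}},v_{\bar{\mathfrak{p}}}$ are the $\mathfrak{p}$- and $\bar{\mathfrak{p}}$-adic valuations. This is well defined on $\A_q/\Rati^*$: scaling $\xi$ by a rational changes $v_{\mathfrak{p}}$ and $v_{\bar{\mathfrak{p}}}$ by the same amount (only $2=\mathfrak{p}\bar{\mathfrak{p}}$ contributes, and equally to both), so the difference is unchanged, and additivity of valuations makes $\delta$ a homomorphism. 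The crucial step is injectivity: if $[\xi]\in H$ with $N(\xi)=2^{2r}$ and $\delta([\xi])=0$, then $v_{\mathfrak{p}}(\xi)=v_{\bar{\mathfrak{p}}}(\xi)=r$, so $(\xi)=\mathfrak{p}^r\bar{\mathfrak{p}}^r=(2)^r$ as ideals; as the only units of $\mathcal{O}$ are $\pm1$ (here $q\geq 7$), this forces $\xi=\pm 2^r\in\Rati^*$, i.e. $[\xi]$ is the identity. Hence $\delta$ embeds $H$ into $\Inte$, so $H$ is infinite cyclic.

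Finally I would invoke irreducibility. Being a generator of the free abelian group $\P_q=\F_q$, the class $[a,b,2^k]$ is indivisible; as a nonzero element of the infinite cyclic group $H$ it must therefore be a generator of $H$ (writing $[a,b,2^k]=m\cdot g$ for a generator $g$ of $H$, indivisibility forces $m=\pm1$). Consequently every element of $H$ --- in particular every $[x,y,2^r]$ coming from a triple $(x,y,2^r)\in T_q$ --- is an integer multiple $n\cdot[a,b,2^k]$, which is the claim. In fact this argument needs neither $r\geq k$ nor that $(x,y,2^r)$ be primitive, and it shows in addition that there are no nontrivial power-of-$2$ triples with $r<k$, consistent with $[a,b,2^k]$ generating $H$.

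The step I expect to be the main obstacle is controlling the arithmetic at $2$ carefully, because the coordinates $x,y$ naturally live in the non-maximal order $\Inte[\sqrt{-q}]$ of conductor $2$, whereas the clean splitting $(2)=\mathfrak{p}\bar{\mathfrak{p}}$ takes place in the maximal order $\mathcal{O}=\Inte[\tfrac{1+\sqrt{-q}}{2}]$. One must check that passing to $\mathcal{O}$ and using $v_{\mathfrak{p}},v_{\bar{\mathfrak{p}}}$ correctly captures the invariant, and when $\mathcal{O}$ is not a principal ideal domain (e.g.\ $q=15$, where the class number is $2$) one cannot factor elements directly and the argument must stay on the level of ideals; fortunately injectivity of $\delta$ only uses that $(2)$ is principal, so it survives a nontrivial class group. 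As an alternative more in the spirit of the paper's ``peeling off,'' one could argue by descent on $r$: show that exactly one of $[x,y,2^r]\pm[a,b,2^k]$ reduces to a primitive power-of-$2$ triple with third coordinate $2^{r-1}$, descending to the base case $r=k$ where one proves $[x,y,2^k]=\pm[a,b,2^k]$ directly; here the hypothesis $r\geq k$ is exactly what keeps the descent in range, and the delicate point becomes the $2$-adic bookkeeping identifying which of the two sums is divisible by $2^{k+1}$.
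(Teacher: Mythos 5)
Your proof is correct, but it follows a genuinely different route from the paper's. The paper proves the theorem by descent, in the spirit of its Theorem 2: writing $[x,y,2^r]\pm[a,b,2^k]=[xa\mp qyb,\ ay\pm xb,\ 2^{r+k}]$, it shows by an explicit $2$-adic computation (inverting the $2\times 2$ matrix relating $(x,y)$ to the new coordinates, and checking that if $v_2(ay-xb)=d\leq k$ then the odd cofactor $R$ would be forced to be even) that for the appropriate choice of sign the middle coordinate is divisible by $2^{k+1}$; cancelling that power of $2$ drops the third coordinate to $2^{r-1}$, and iterating peels off copies of $[a,b,2^k]$ one at a time. Your argument is structural instead: all power-of-two classes form a subgroup $H$; the map $\delta=v_{\mathfrak{p}}-v_{\bar{\mathfrak{p}}}$ embeds $H$ into $\Inte$ (injectivity indeed uses only that $(2)=\mathfrak{p}\bar{\mathfrak{p}}$ is principal and that the units of $\mathcal{O}$ are $\pm1$ for $q>3$, so a nontrivial class group such as for $q=15$ is harmless, exactly as you say); hence $H$ is infinite cyclic, and the irreducible element $[a,b,2^k]$ --- being a member of a basis of the free group $\P_q=\F_q$, hence indivisible --- must generate $H$. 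Each of these steps checks out, your handling of the non-maximal order $\Inte[\sqrt{-q}]$ versus $\mathcal{O}$ is the right one (valuations live on $K^*$, so nothing is lost), and the ``descent'' alternative you sketch at the end is essentially the paper's actual proof. The trade-off: the paper's argument is elementary, self-contained and effective (it is literally the algorithm that computes $n$), whereas yours is shorter, dispenses with both hypotheses $r\geq k$ and primitivity, and yields the sharper structural conclusion that the power-of-two classes are exactly $\langle[a,b,2^k]\rangle\cong\Inte$ (in particular no nontrivial solutions $(x,y,2^j)$ with $j<k$ exist); the price is that it leans on the identification $\P_q\cong\A_q/\Rati^*$ and on freeness of $\P_q$, which the paper asserts (Remark 3, Lemma 1) but does not prove in detail.
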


\begin{proof}
Our idea of the proof is to show that given such a triple
$(x,y,2^r)\in T_q$ with $r\geq k$, we can always ``peel of" (i.e.
add or subtract) one copy of $(a,b,2^k)$ so the resulting primitive
triple will have the third coordinate $\leq 2^{r-1}$. Thus we
consider
$$
[S,T,V] := [x,y,2^r] \pm [a,b,2^k] = \left\{
\begin{array}{lcc}
[xa-qyb,~ay + xb,~2^{r+k}] \\

[xa+qyb,~ay - xb,~2^{r+k}] \\
\end{array}\right.
$$

\noindent Since $a,~b,~x$ and $y$ are all odd, either $ay + xb$ or
$ay - xb$ must be divisible by 4. Let's assume that $4~|~ay-xb$ and
hence we can write $ay - xb = 2^d\cdot R$, where $d\geq 2$. Clearly,
it's enough to prove that $d\geq k+1$. We prove it by induction,
i.e. we will show that if $d\leq k$, then $R$ must be even.

\noindent Since $S = xa+qyb$ we could write
$$
\begin{pmatrix}
2^d\cdot R\\
S
\end{pmatrix} =
\begin{pmatrix}
-b & a\\
a & qb
\end{pmatrix} \cdot
\begin{pmatrix}
x\\
y
\end{pmatrix}
~~~\mbox{and hence}~~~
\begin{pmatrix}
x\\
y
\end{pmatrix} = \frac{1}{2^{2k}}\cdot
\begin{pmatrix}
qb & -a\\
-a & -b
\end{pmatrix} \cdot
\begin{pmatrix}
2^d\cdot R\\
S
\end{pmatrix}
$$
which gives $ bS = -2^{2k}y -a2^dR$. Since $(bS,bT,bV)\in T_q$, we
can also write
$$
(2^{2k}y + a2^dR)^2 + qb^2\cdot(2^dR)^2 = b^2\cdot2^{2r+2k}.
$$
This last identity is equivalent to the following one (after using
$a^2+qb^2 = 2^{2k}$ and dividing all terms by $2^{2k}$)
$$
2^{2k}y^2 + 2^{d+1}ayR + 2^{2d}R^2 = b^22^{2r}.
$$
Furthermore, we can cancel $2^{d+1}$ as well, because $1<d\leq k\leq
r$, and then we will obtain that
$$
ayR = b^22^{2r-d-1} - 2^{d-1}R^2 - 2^{2k-d-1}y^2 = \mbox{even},
$$
which finishes the proof since $a$ and $y$ are odd.

\end{proof}

\noindent \underline{Remark 6}: Please note that if a primitive
triple $(a,b,2\cdot d)\in T_{q}$ for $q \equiv 7\pmod{8}$, it is
easy to show that $d$ must be even (compare with Observation \# 2 of
\cite{Baldisserri}, where $\lambda$ must be at least 2). When
$q\in\{7,~15\}$, we obtain the generators $(3,1,4)$ and $(1,1,4)$
respectively. However, if for example $q=23$, the primitive solution
$(a,b,c)$ where $c$ is the smallest power of 2 is $(7,3,16)$ but
$(11,1,12)$ also belongs to $\P_{23}$.

\noindent Siena College, Department of Mathematics\\
515 Loudon Road, Loudonville NY 12211\\ ~ \\
{\small nkrylov@siena.edu {\it and} lm15kulz@siena.edu}

\end{document}